\title{\vspace{-0.9cm} Powers of paths in tournaments}
\author{
Nemanja Dragani\'c\thanks{Department of Mathematics, ETH Zurich, Zurich, Switzerland. Emails: \texttt{\{nemanja.draganic, david.munhacanascorreia, benjamin.sudakov\}@math.ethz.ch}. Research supported in part by SNSF grant 200021\_196965}
\and
Fran\c{c}ois Dross\thanks{Universit\'e de Bordeaux, CNRS, Bordeaux INP, LaBRI, UMR 5800, F-33400, Talence, France. Email: \texttt{francois.dross@u-bordeaux.fr}. Research supported in part by ERC grant No 714704.}
\and
Jacob Fox\thanks{Department of Mathematics, Stanford University, Stanford, CA, USA. Email: \texttt{jacobfox@stanford.edu}. Research supported by a Packard Fellowship and by NSF award DMS-185563.}
\and
Ant\'onio Gir\~ao\thanks{Institut f\"ur Informatik, Universit\"at Heidelberg, Germany. E-mail: \texttt{tzgirao@gmail.com}.}
\and
Fr\'ed\'eric Havet\thanks{CNRS, Universit\'e C\^ote d'Azur, I3S, INRIA, Sophia Antipolis, France. Email: \texttt{frederic.havet@inria.fr}. Research supported in part by Agence Nationale de la Recherche under contract Digraphs ANR-19-CE48-0013-01.}
\and
D\'aniel Kor\'andi\thanks{Mathematical Institute, University of Oxford, Andrew Wiles Building, Radcliffe Observatory Quarter, Woodstock Road, Oxford, United Kingdom.   Emails: \texttt{\{korandi, scott\}@maths.ox.ac.uk}.} \thanks{Supported by SNSF Postdoc.Mobility Fellowship P400P2\_186686.}
\and
William Lochet\thanks{Department of Informatics, University of Bergen, Norway. Email: \texttt{william.lochet@uib.no}. Research supported by ERC grant No 819416.}
\and
David Munh\'a Correia\footnotemark[1]
\and
Alex Scott\footnotemark[6]
\and
Benny Sudakov\footnotemark[1]
}
\newtheorem{thm}{Theorem}%[section]
\newtheorem{lem}[thm]{Lemma}
\Crefname{thm}{Theorem}{Theorems}
\Crefname{lem}{Lemma}{Lemmas}
\Crefname{prop}{Proposition}{Propositions}
\Crefname{conj}{Conjecture}{Conjectures}
\Crefname{cor}{Corollary}{Corollaries}
\Crefname{prob}{Problem}{Problems}
\theoremstyle{definition}
\newtheorem*{claim}{Claim}
\Crefname{defn}{Definition}{Definitions}
\Crefname{claim}{Claim}{Claims}
\Crefname{obs}{Observation}{Observations}
\newcommand{\ceil}[1]{\left\lceil #1 \right \rceil}
\newcommand{\eps}{\varepsilon}
\newcommand{\subs}{\subseteq}
\date{}
\begin{document}

\maketitle

\begin{abstract}
In this short note we prove that every tournament contains the $k$-th power of a directed path of linear length. This improves upon recent results of Yuster and of Gir\~ao. 
We also give a complete solution for this problem when $k=2$, showing that there is always a square of a directed path of length $\ceil{2n/3}-1$, which is best possible.
\end{abstract}

\section{Introduction}

One of the main themes in extremal graph theory is the study of embedding long paths and cycles in graphs. Some of the classical examples include the Erd\H{o}s--Gallai theorem \cite{ErG} that every $n$-vertex graph with average degree $d$ contains a path of length $d$, and Dirac's theorem \cite{Dir} that every graph with minimum degree $n/2$ contains a Hamilton cycle. A famous generalization of this, conjectured by P\'osa and Seymour, and proved for large $n$ by Koml\'os, S\'ark\"ozy and Szemer\'edi \cite{KSS98}, asserts that if the minimum degree is at least $kn/(k+1)$, then the graph contains the $k$-th power of a Hamilton cycle.

In this note, we are interested in embedding directed graphs in a tournament. A tournament is an oriented complete graph.
The $k$-th power of the directed path $\vec{P}_{\ell} = v_0\dots v_{\ell}$ of length $\ell$ is the graph $\vec{P}^k_{\ell}$ on the same vertex set containing a directed edge $v_iv_j$ if and only if $i<j\le i+k$. The $k$-th power of a directed cycle is defined analogously.  
An old result of Bollob\'as and H\"aggkvist \cite{BH90} says that, for large $n$,  
every $n$-vertex tournament with all indegrees and outdegrees at least $(1/4+\eps)n$ contains the $k$-th power of a Hamilton cycle (the constant $1/4$ is optimal). 
However, we cannot expect to find powers of directed cycles in general, as the transitive tournament contains no cycles at all.  

What about powers of directed paths? A classical result, which appears in every graph theory book (see, e.g., \cite{West}), says that every tournament contains a directed Hamilton path. On the other hand,
Yuster \cite{Y21} recently observed that some tournaments are quite far from containing the square of a Hamilton path. In particular,
there is an $n$-vertex tournament that does not even contain the square of $\vec{P}_{2n/3}$, and more generally, for every $k\ge2$, there are tournaments with $n$ vertices and no $k$-th power of a path with more than $nk/2^{k/2}$  vertices.
In the other direction, Yuster proved that every tournament with $n$ vertices  contains the square of a path of length $n^{0.295}$. This was improved very recently by Gir\~ao \cite{G20}, who showed that for fixed $k$, every tournament on $n$ vertices contains the $k$-th power of a path of length $n^{1-o(1)}$.   Both papers noted that no sublinear upper bound is known.  Our main result shows that the maximum length is in fact linear in $n$.

\begin{thm} \label{thm:main} For $n \geq 2$, every $n$-vertex tournament contains the $k$-th power of a directed path of length $n/2^{4k+6}k$.
\end{thm}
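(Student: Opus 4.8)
\section*{Proof proposal for \Cref{thm:main}}

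The plan is to prove the bound by induction on $n$, with $k$ fixed. The shape of the recursion I am aiming for is
\[ f_k(n)\ \ge\ 2 f_k(\lfloor n/2\rfloor)\ -\ O(k) \]
once $n$ exceeds a threshold $n_0=2^{\Theta(k)}$, where $f_k(n)$ denotes the largest $\ell$ guaranteed. The base case is handled by a transitive subtournament: every tournament on $m\ge 2^{k+1}$ vertices contains a transitive subtournament on $\lceil\log_2 m\rceil\ge k+1$ vertices, and a transitive tournament on $s$ vertices is exactly $\vec P^{\,s-1}_{s-1}$, hence contains $\vec P^k_{s-1}$; as long as $m\le n_0$ the resulting length $s-1$ is comfortably above $m/(2^{4k+6}k)$, while for $m<2^{k+1}$ the claimed length is below $1$ and the statement is vacuous. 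Unrolling the recursion $\log(n/n_0)$ times then gives a length of order $n/2^{\Theta(k)}$, and the point of the delicate bookkeeping below is to make the constants come out as $n/(2^{4k+6}k)$.

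For the inductive step I would first replace the statement by a slightly stronger one that also prescribes the \emph{interface} at the two ends of the path — roughly, that the last $k$ vertices $t_1\to\dots\to t_k$ of the path produced (which automatically form a transitive tournament, since the final $k$ vertices of any $k$-th power do) can be chosen to dominate a prescribed small set, and symmetrically at the front. This is what will make two pieces gluable. Now take a median order $v_1,\dots,v_n$ of $T$ (a linear order maximising the number of forward edges) and split it into a prefix $X=\{v_1,\dots,v_{\lceil n/2\rceil}\}$ and a suffix $Y$. The property of median orders that makes the split useful is that at least half of the $|X||Y|$ edges between these two intervals point forward, from $X$ to $Y$ — otherwise swapping the two intervals would strictly increase the number of forward edges. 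From ``at least half the edges forward'' a short averaging/convexity argument produces a set $Z\subseteq X$ of $k$ vertices that jointly dominate at least $|Y|/2^{O(k)}$ vertices of $Y$.

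The heart of the argument is then the merging step: one wants to use $Z$ (or a transitive/$k$-th-power block built around it) as a rigid connector, recursing inside $T[X]$ to obtain a $k$-th power $P_X$ whose tail dominates $Z$, and recursing inside the $\ge|Y|/2^{O(k)}$ vertices of $Y$ dominated by $Z$ to obtain a $k$-th power $P_Y$. Since every vertex of $P_Y$ is dominated by every vertex of $Z$, the concatenation $P_X\cdot Z\cdot P_Y$ is again a $k$-th power of a path: every window of $k+1$ consecutive vertices straddling a seam consists of a tail of $P_X$ (or of $Z$) followed by a head of $Z$ (or of $P_Y$), and each such window is a transitive tournament in the correct order precisely because of the complete domination and because $Z$ is transitive. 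This loses only $O(k)$ vertices at the seam and a factor $2^{O(k)}$ in the number of vertices handed down to the recursion in $Y$, which is consistent with the target recursion above.

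The step I expect to be the main obstacle is exactly this merging — making the $k$ interface vertices coming out of the recursion on $X$ be a \emph{transitive} set with large common out-neighbourhood in $Y$, rather than just an arbitrary $k$-set. ``At least half the edges forward'' does not by itself force $k$ vertices with large common domination (a quasirandom bipartite orientation has only logarithmically large forward-complete subgraphs), so the strengthened induction hypothesis must be designed to carry exactly the right interface data, and one must be careful not to pay a $2^{\Theta(k^2)}$ factor for transitivity — the exponent in the theorem is linear in $k$, which is what rules out naive arguments that first fix a transitive $k$-set and only then look at its domination. Getting the $2^{O(k)}$ and $O(k)$ losses tight enough that $\log(n/n_0)$ levels of recursion still leave length at least $n/(2^{4k+6}k)$ is the remaining, purely computational, part.
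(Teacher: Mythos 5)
\noindent Your divide-and-conquer recursion does not close to a linear bound, and this is the essential gap. As you set it up, you recurse once inside $X$ (size $\approx n/2$) and once inside the set of $Y$-vertices dominated by $Z$, which you correctly estimate has size only $\geq |Y|/2^{O(k)}$. That gives a recurrence of the shape $f_k(n)\ge f_k(n/2)+f_k(cn)-O(k)$ with $c=2^{-\Theta(k)}<1/2$, \emph{not} the recurrence $f_k(n)\ge 2f_k(\lfloor n/2\rfloor)-O(k)$ you announce. The solution of $f(n)=f(an)+f(bn)$ is $n^{\gamma}$ with $a^{\gamma}+b^{\gamma}=1$; when $a=1/2$ and $b=c<1/2$ this forces $\gamma<1$, i.e.\ a bound of the form $n^{1-\eps(k)}$ --- essentially Gir\~ao's $n^{1-o(1)}$ bound, which is exactly what the theorem is trying to improve. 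You only get linearity if both recursive calls keep a constant (independent of $k$) fraction of $n$, but the $2^{-\Theta(k)}$ loss you incur to find $k$ vertices with a large common out-neighbourhood in $Y$ is unavoidable with this decomposition, and you pay it at \emph{every} level of the recursion. Your worry about transitivity, incidentally, has a cheap fix (used in the paper's Lemma~\ref{lem:kst}): pass first to a transitive $(2k+1)$-subtournament and \emph{then} run the K\H{o}v\'ari--S\'os--Tur\'an/dependent-random-choice count inside it, so any $k$-subset you extract is automatically transitive at no extra exponential cost.

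\noindent The paper avoids the recursion problem entirely by working \emph{iteratively} rather than recursively. It fixes a median ordering once, and slides a window of size $t=2^{4k+4}k$ from left to right. The key observation is a \emph{local} consequence of median orderings (not the global ``half the edges cross forward'' fact you use): each vertex in a current interval has out-degree at least $(1-\frac1{2k+1})|B|/2$ into the next $(2k+1)t$ positions $B$, since otherwise moving it past $B$ would increase the number of forward edges. From a transitive $(2k+1)$-set in the current interval, the KST lemma extracts a transitive $k$-set with $\geq 2^{2k}$ common out-neighbours inside some subinterval of $B$ of length $t$; those $k$ vertices become the next block of the path, and the $2^{2k}$ common out-neighbours become the pool for the following step. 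Because the pool is refilled to size $2^{2k}$ at every step and the window advances by at most $(2k+1)t$ positions per step while contributing $k$ path vertices, the $2^{O(k)}$ loss is paid once per block of $k$ vertices rather than once per halving of $n$, and the total length is $\Theta\bigl(kn/((2k+1)t)\bigr)=n/2^{O(k)}$. If you want to salvage a recursive write-up, you would need an induction hypothesis that hands back a $2^{2k}$-sized ``interface pool'' at both ends and recurses on essentially all of $Y$, not on the $2^{-\Theta(k)}$-fraction dominated by a fixed $Z$; at that point you are effectively rederiving the paper's sliding-window argument.
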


The proof of this theorem combines K\H{o}v\'ari--S\'os--Tur\'an style arguments, used for the bipartite Tur\'an problem, and median orderings of tournaments. A median ordering is a vertex ordering that maximizes the number of forward edges. \Cref{thm:main} and Yuster's construction show that an optimal bound on the length has the form $n/2^{\Theta(k)}$. 
It would be interesting to find the exact value of the constant factor in the exponent.
Optimizing our proof can yield a lower bound of $n/2^{ck+o(k)}$ with $c\approx 3.9$, but is unlikely to give the correct bound.
%For the optimal constant, we should let $A$ in \Cref{lem:kst} have size $\gamma k$ with $\gamma 1+2/\sqrt{3}$, and choose a lot of intervals in the Claim instead of just 3 (so that the density between $A$ and $B$ is almost 1/2). Then $2^{ck} \approx 2^{\gamma k}\cdot \binom{\gamma k}{k} / \binom{\gamma k/2}{k}$.

We also improve the exponential constant in the upper bound from $1/2$ to $1$.

\begin{thm} \label{thm:upper}
Let $k\ge 5$ and $n\ge k(k+1)2^k$. There is an $n$-vertex tournament that does not contain the $k$-th power of a directed path of length $k(k+1)n/2^k$.
\end{thm}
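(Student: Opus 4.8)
The plan is to construct the tournament recursively. The starting point is the observation underlying Yuster's construction: if $T$ is a tournament and $v$ is a vertex, then any copy of $\vec P^k_\ell$ in $T$ restricts on the out-neighbourhood $N^+(v)$ and the in-neighbourhood $N^-(v)$ to a bounded number of ``runs''. More precisely, if $v_0v_1\dots v_\ell$ is the $k$-th power of a path in $T$, then whenever $v_i\in N^+(v)$ and $v_{i+k}\in N^+(v)$ but say $v_{i+j}\in N^-(v)$ for some $0<j<k$, we get an edge from $v$ to $v_i$ and from $v_{i+k}$ to $v$, while $v_i$ and $v_{i+k}$ are joined in the path; doing this carefully shows that the indices landing in $N^+(v)$ break into at most $k$ contiguous blocks of length at least... -- the key quantitative statement is that a $k$-th power of a path of length $\ell$ meets each of $N^+(v)$ and $N^-(v)$ in a sub-(power of a path) whose total length is at least roughly $\ell/2 - O(k)$ on one of the two sides. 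Iterating the bisection $t$ times produces, from a path-power of length $\ell$, a path-power of length at least $\ell/2^t - O(k)$ inside a tournament obtained by $t$-fold ``splitting''.

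Concretely, I would define a sequence of tournaments $T_0, T_1, T_2, \dots$ where $T_0$ is a single vertex and $T_{i+1}$ is built from two disjoint copies $A$, $B$ of $T_i$ together with a third ingredient (a transitive tournament, or a further recursively-built piece) arranged so that every vertex of $A$ beats every vertex of $B$ etc., chosen precisely so that: (1) $|T_{i+1}| = 2|T_i| + (\text{small correction})$, so $|T_t| \approx c\cdot 2^t$ for the right constant $c$ depending on $k$; and (2) the longest $k$-th power of a path in $T_{i+1}$ has length at most $2\cdot(\text{longest in } T_i) + (k+1)$ or so, because any such path-power, restricted via the dominating structure, splits into at most two genuine path-power pieces in the two copies of $T_i$ plus a bounded ``interface'' of length $O(k)$. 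Unwinding the recursion, the longest $k$-th power of a path in $T_t$ is at most $2^t \cdot f(k) = O(k^2 \cdot 2^t / 2^t \cdot \text{stuff})$; matching this against $|T_t| = n$ gives the bound $k(k+1)n/2^k$ after choosing $t$ so that $n \approx k(k+1)2^{k}$ scales correctly, i.e. one takes the recursion depth to be about $k$ and absorbs the polynomial factors into the $k(k+1)$.

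The heart of the argument, and the step I expect to be the main obstacle, is proving the splitting lemma with the right constants: one must show that if $D$ is a tournament in which $V(D)$ partitions into $X_1 \Rightarrow X_2 \Rightarrow \dots \Rightarrow X_m$ (each $X_j$ completely dominating $X_{j'}$ for $j<j'$, where $m$ is roughly $k+1$), then a $k$-th power of a directed path in $D$ visits the blocks in a monotone-ish fashion and spends a contiguous stretch in each, so its length is at most $\sum_j (\text{longest } k\text{-th power in } D[X_j]) + O(\text{number of transitions})$. The subtlety is that a vertex $v_i$ in block $X_j$ can have in-neighbours in the same path among $v_{i-k},\dots,v_{i-1}$ that lie in later blocks $X_{j'}$, $j'>j$ — but this is impossible because $X_j \Rightarrow X_{j'}$ forces those edges to go the wrong way; so in fact once the path leaves block $X_j$ for a strictly later block it can never return to $X_j$ or earlier, and the transitions number at most $m-1 = O(k)$, each contributing at most $k$ to a ``wasted'' interface. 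One must also verify that within each block the induced path-power really is a path-power (no edges skipped), which holds because consecutive path-vertices within a block keep the same relative order.

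Finally I would do the bookkeeping: set $m = k+1$ (so each level of recursion multiplies the vertex count by $k+1$ while the best-path length only grows additively across the $k+1$ sub-blocks, but each sub-block at that level is itself one level down), and check that choosing the number of recursion levels to be exactly such that $|T| = k(k+1)2^k \le n$ while the longest $k$-th power of a path stays below $k(k+1)n/2^k$; the constraint $k \ge 5$ enters only to make the additive $O(k)$ error terms smaller than the main term at every level, and $n \ge k(k+1)2^k$ is exactly what is needed to run the recursion to full depth. I would present the construction explicitly (a weighted blow-up of a transitive tournament on $k+1$ vertices iterated $\log_2$-many times, or equivalently the tournament whose vertices are length-$t$ strings over an alphabet of size $k+1$ with a lexicographic-type orientation) so that both $|T|$ and the path-power bound can be read off directly, and then the theorem follows by a one-line comparison of the two quantities.
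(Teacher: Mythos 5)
Your proposal and the paper's proof do share one ingredient: the fact that if a tournament is partitioned into blocks $X_1 \Rightarrow X_2 \Rightarrow \dots \Rightarrow X_m$ with all cross-edges forward, then any $k$-th power of a path visits the blocks monotonically and contiguously, so its length is at most the sum of the longest path-power lengths inside the blocks. This is exactly the paper's subadditivity lemma ($\ell_k(n+m) \le \ell_k(n) + \ell_k(m)$), and your version of it for $m$ blocks follows by iterating; there is no ``wasted interface'' to worry about, since the decomposition is exact.

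However, your plan has a serious gap: subadditivity alone produces nothing, and your recursion has no nontrivial base case. If $T_0$ is a single vertex and $T_{t+1}$ is built by placing $m$ disjoint copies of $T_t$ with all edges directed forward between them, then the decomposition above gives $L_{t+1}+1 = m(L_t+1)$ and $|T_{t+1}| = m|T_t|$, where $L_t$ is the longest $k$-th power of a path in $T_t$ (measured in edges). With $L_0 = 0$ and $|T_0|=1$ you get $L_t = |T_t|-1$ at every level: you have simply rebuilt the transitive tournament, which contains $\vec{P}^k_{n-1}$. Your claim that ``the best-path length only grows additively across the $k+1$ sub-blocks'' is wrong; it grows multiplicatively, exactly in step with the vertex count, so the ratio never improves. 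The related bisection heuristic (that a path-power of length $\ell$ leaves a sub-path-power of length about $\ell/2 - O(k)$ in one of $N^+(v), N^-(v)$) is also not sound as stated, since $N^+(v)$ and $N^-(v)$ do not form a dominated pair and the path can alternate between them, shattering its trace in each side into many short fragments.

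What is missing is a \emph{seed}: a small tournament in which the longest $k$-th power of a path is already much shorter than the vertex count. The paper supplies this via a first-moment argument: a uniformly random tournament on $2^{k-1}$ vertices with positive probability contains no $\vec{P}^k_{\ell-1}$ with $\ell = k(k+1)/2$, because the expected number of ordered $\ell$-tuples spanning a copy is $\binom{n}{\ell}\ell!\,2^{-(k-1)\ell} < n^{\ell}2^{-(k-1)\ell} = 1$. Feeding this seed into the block-concatenation bound (tiling $[n]$ by $\lceil n/2^{k-1}\rceil$ copies) immediately yields $\ell_k(n) \le \lceil n/2^{k-1}\rceil\bigl(k(k+1)/2 - 1\bigr) \le k(k+1)n/2^k$ for $n \ge k(k+1)2^k$. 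Your construction needs an analogous base ingredient with exponentially many vertices but only $O(k^2)$ path-power length; without it the recursion cannot get off the ground.
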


\noindent
Note that this theorem also holds trivially for $k \le 4$, when $k(k+1)n/2^k>n$.

Finally, we can solve the problem completely in the special case of $k=2$. Once again, the proof uses certain properties of median orderings.

\begin{thm} \label{thm:square}
For $n\ge 1$, every $n$-vertex tournament contains the square of a directed path of length $\ell= \ceil{2n/3}-1$, but not necessarily of length $\ell+1$.
\end{thm}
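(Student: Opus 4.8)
For the lower bound I would use median orderings, and for the upper bound an explicit extremal tournament. Fix a median ordering $v_1,\dots,v_n$. Two standard features will be used: it is a directed Hamilton path, so $v_iv_{i+1}$ is always an edge; and, more generally, each $v_i$ has at least as many out-neighbours as in-neighbours inside every suffix block $\{v_{i+1},\dots,v_j\}$, and dually each $v_j$ has at least as many in- as out-neighbours inside every prefix block $\{v_i,\dots,v_{j-1}\}$ (otherwise a single-vertex relocation would create more forward edges). Call a position $i$ \emph{bad} if $v_{i+2}v_i$ is an edge, so that $v_iv_{i+1}v_{i+2}v_i$ is a directed triangle and the square of the Hamilton path breaks at $i$; otherwise $v_iv_{i+2}$ is an edge. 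Two short exchange arguments then give: (a) no two consecutive positions are both bad --- inspecting $v_i,v_{i+1},v_{i+2},v_{i+3}$, if $v_{i+2}v_i$ and $v_{i+3}v_{i+1}$ were both edges, some reordering would beat the median count; and (b) if $i$ is bad then $v_iv_{i+3}$ and $v_{i-1}v_{i+2}$ are edges (apply the suffix inequality to $\{v_{i+1},v_{i+2},v_{i+3}\}$ and the prefix inequality to $\{v_{i-1},v_i,v_{i+1}\}$). The shortcut edges in (b) are exactly what is needed to route a square of a path around a directed triangle.

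Now build a square of a path greedily along $v_1,v_2,\dots$, appending the next vertex of the ordering while the square-of-path condition holds. By the above the process can only stall at a bad position $i$, with the current path ending $\dots,v_{i-1},v_i,v_{i+1}$; there one \emph{skips a single vertex} --- either drop $v_{i+2}$ and continue $v_{i+1},v_{i+3},\dots$ (legal since $v_{i+1}v_{i+3}$ and $v_iv_{i+3}$ are edges), or drop $v_i$ and continue $v_{i-1},v_{i+1},\dots$ (legal since $v_{i-1}v_{i+1}$ and $v_{i-1}v_{i+2}$ are edges), choosing according to whether positions $i\pm 2$ are bad --- and resume. The core claim is that at most $\lfloor n/3\rfloor$ vertices are ever skipped; I would prove this by charging each skipped vertex to a triple of consecutive vertices of the ordering (the triangle it paid for) and showing, using (a), that these triples can be taken pairwise disjoint. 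This leaves at least $n-\lfloor n/3\rfloor=\lceil 2n/3\rceil$ vertices in the constructed square of a path, i.e.\ length at least $\ell$. The main obstacle is precisely this bookkeeping: unlike distance one, two bad positions \emph{can} occur at distance two (the tournament may be a chain of directed triangles sharing a single vertex), the naive "drop $v_{i+2}$" can then fail a couple of steps later so one must argue "drop $v_i$" works there instead, and --- crucially for the count --- such interlocking triangles cost \emph{fewer} skips than isolated ones (one skip absorbs the overlap), so that the extremal case is bad positions spaced exactly three apart, which is the configuration realized below. Setting up an invariant strong enough to close this case analysis, and handling the two ends of the ordering, is where the real work lies.

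For the upper bound I would take the following tournament: $q=\lfloor n/3\rfloor$ vertex-disjoint copies of $\vec C_3$, ordered so every vertex of an earlier copy dominates every vertex of a later one, plus a transitive tournament on the remaining $r=n-3q\in\{0,1,2\}$ vertices, all dominated by the triangles. Along any directed path the index of the copy containing a vertex is nondecreasing, so the vertices of any square of a directed path lying in one fixed copy form a contiguous sub-path, hence a square of a path inside $\vec C_3$; but $\vec C_3$ has no transitive triangle, so it has at most two vertices. Thus each copy is met at most once and contributes at most two vertices, while the $r$ tail vertices have out-degree at most one and so can only occupy the last one or two positions of the path, contributing at most $r$ more. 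Hence every square of a directed path has at most $2q+r=\lceil 2n/3\rceil$ vertices, i.e.\ length at most $\ell$, and this bound is met by taking two vertices of each triangle in order followed by the tail; so this tournament contains no square of a path of length $\ell+1$.
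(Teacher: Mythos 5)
Your upper bound construction is correct and is essentially the paper's: the paper's subadditivity lemma, applied with $\vec{C}_3$ as the $3$-vertex extremal example, produces exactly your chain of directed triangles with forward cross-edges. Your median-ordering facts (a) and (b) are also true and are close to what the paper proves about median orderings. The upper-bound half can stand.

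For the lower bound, however, there is a genuine gap, which you yourself flag: the greedy ``skip one vertex per triangle'' process does not straightforwardly close. Concretely, after the current path ends $\dots, v_i, v_{i+1}$ and you drop $v_{i+2}$, the constructed path is $\dots, v_i, v_{i+1}, v_{i+3}, v_{i+4}, \dots$, and the square-of-path condition now requires the edge $v_{i+1}v_{i+4}$, which is at distance $3$ in the ordering and is \emph{not} guaranteed by the prefix/suffix inequalities of a median ordering. (Symmetrically, dropping $v_i$ requires $i-1$ not to be bad, which may also fail, and the two escape routes can fail simultaneously at different bad positions.) So the greedy process can stall with no legal local repair, and the charging argument for the $\lfloor n/3\rfloor$ count is left open in the delicate case of bad positions at distance two. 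The paper resolves exactly this by a different idea: rather than greedily repairing after the fact, it proves (their Lemma~4) that one can always \emph{choose} the median ordering so that a stronger local condition (``no bad index,'' where $i$ is bad means $x_ix_{i-2}$ is an edge and $x_{i+2}$ also beats one of $x_{i-1},x_i$) never occurs. The proof uses the freedom to rotate $x_{i-2}x_{i-1}x_i$ within a median ordering. Once such an ordering is fixed, the set $I=\{i: x_ix_{i-2}\text{ is not an edge}\}$ is shown directly to have size at least $\lceil 2n/3\rceil$ and the vertices indexed by $I$ are proved, by a short case check, to span a square of a path. No greedy process or skip bookkeeping is needed; the missing ``distance-3'' edges you need are supplied exactly by the no-bad-index property. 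To complete your argument you would need either to prove an analogue of this lemma or to supply the invariant you mention; as written, the lower bound is not established.
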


Theorems \ref{thm:main}, \ref{thm:upper} and \ref{thm:square} are proved in Sections \ref{sec:lower}, \ref{sec:upper} and \ref{sec:square}, respectively.

\section{Lower bound} \label{sec:lower}

We will need the following K\H{o}v\'ari--S\'os--Tur\'an style lemma. 
\begin{lem} \label{lem:kst} Let $G$ be a directed graph with disjoint vertex subsets $A$ and $B$ with $|A|=2k+1$, $|B| \geq 2^{4k+4}k$, and every vertex in $A$ has at least $(1-\frac{1}{2k+1})|B|/2$ outneighbours in $B$. Then $A$ contains a subset $A'$ of size $k$ that has at least $(2k+1)2^{2k}$ common outneighbours in $B$.
\end{lem}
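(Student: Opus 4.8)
The plan is to run a standard double-counting (Kővári–Sós–Turán) argument, but with the twist that we must locate a $k$-subset of the \emph{small} side $A$ with many common outneighbours in $B$, rather than a large set in $B$.

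First I would count, for each vertex $b\in B$, the number $d(b)$ of vertices in $A$ that have $b$ as an outneighbour; equivalently, $d(b)=|\{a\in A: a\to b\}|$. By the hypothesis, each of the $2k+1$ vertices of $A$ sends at least $(1-\frac{1}{2k+1})|B|/2 = \frac{2k}{2k+1}\cdot\frac{|B|}{2} = \frac{k|B|}{2k+1}$ outneighbours into $B$, so $\sum_{b\in B} d(b) = \sum_{a\in A}|N^+(a)\cap B| \geq (2k+1)\cdot\frac{k|B|}{2k+1} = k|B|$. Hence the average of $d(b)$ over $b\in B$ is at least $k$.

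Next I would count pairs $(b,A')$ where $b\in B$, $A'\subseteq A$, $|A'|=k$, and every vertex of $A'$ points to $b$; the number of such $A'$ for a fixed $b$ is $\binom{d(b)}{k}$. Since $\binom{x}{k}$ is convex in $x$, Jensen's inequality gives $\sum_{b\in B}\binom{d(b)}{k} \geq |B|\binom{\bar d}{k} \geq |B|\binom{k}{k} = |B|$, where $\bar d\geq k$ is the average — but this crude bound only yields one common outneighbour, so I need to be more careful about the tail. The cleaner route is to split $B$ according to whether $d(b)\geq k$ or not. Let $B_1=\{b: d(b)\geq k\}$. Since each $d(b)\leq 2k+1$, we have $k|B|\leq \sum_b d(b) \leq (k-1)|B\setminus B_1| + (2k+1)|B_1| = (k-1)|B| + (k+2)|B_1|$, so $|B_1|\geq \frac{|B|}{k+2}$. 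Every $b\in B_1$ contributes at least $\binom{k}{k}=1$... again too weak on its own. So instead, for $b\in B_1$ pick (arbitrarily) one $k$-subset $A'(b)\subseteq N^-(b)\cap A$; by pigeonhole, since there are only $\binom{2k+1}{k}< 2^{2k+1}$ possible $k$-subsets of $A$, some fixed $A'$ is chosen for at least $\frac{|B_1|}{2^{2k+1}} \geq \frac{|B|}{(k+2)2^{2k+1}}$ vertices $b$. Finally I would check the arithmetic: using $|B|\geq 2^{4k+4}k$, this count is at least $\frac{2^{4k+4}k}{(k+2)2^{2k+1}} = \frac{2^{2k+3}k}{k+2} \geq (2k+1)2^{2k}$, where the last inequality holds since $\frac{8k}{k+2}\geq 2k+1$ fails for large $k$ — so I would instead be slightly more generous by noting $d(b)$ often exceeds $k$, or simply verify the target $(2k+1)2^{2k}$ against $\frac{|B|}{(k+2)2^{2k+1}}$ directly and adjust the grouping (e.g. compare against $\binom{2k}{k}$ rather than $\binom{2k+1}{k}$, or restrict to $b$ with $d(b)=2k+1$ if $B_1$ is large there). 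This final numerical check is the one delicate point; everything before it is routine double counting.

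The main obstacle, then, is not the structure of the argument but making the constants line up: one must allocate the factor $2^{4k+4}$ in $|B|$ so that after losing a $\binom{2k+1}{k}\approx 2^{2k}$ factor to the pigeonhole on subsets and a $1/(k+2)$ factor to isolating the heavy-degree vertices, what remains still exceeds $(2k+1)2^{2k}$. I expect the cleanest way to close the gap is to observe that $\sum_b d(b)\geq k|B|$ together with $d(b)\leq 2k+1$ forces a positive proportion of $B$ to have $d(b)$ strictly larger than $k$ (in fact the average is $\geq k$ and values are capped at $2k+1$), which lets us use $\binom{d(b)}{k}\geq$ something growing, rather than just $1$; combined with convexity this comfortably absorbs the $2^{2k}$ and $k+2$ losses given the generous bound $|B|\geq 2^{4k+4}k$.
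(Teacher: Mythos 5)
Your opening is the right argument, and you then talk yourself out of it. You correctly get $\sum_{b\in B} d(b)\ge k|B|$, apply convexity of $\binom{x}{k}$ to obtain $\sum_{b\in B}\binom{d(b)}{k}\ge |B|\binom{k}{k}=|B|$, and note that one should pigeonhole over the $\binom{2k+1}{k}$ possible $k$-subsets of $A$. At that point you declare the bound ``only yields one common outneighbour'' and abandon it --- that is the error. Pigeonholing over $k$-subsets gives some $A'$ contained in at least $|B|/\binom{2k+1}{k}$ of the sets $N^-(b)\cap A$, and since $\binom{2k+1}{k}\le\tfrac12\cdot 2^{2k+1}=2^{2k}$ and $|B|\ge 2^{4k+4}k$, this is at least $2^{4k+4}k/2^{2k}=16k\cdot 2^{2k}\ge (2k+1)2^{2k}$, which is exactly the target. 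This is the paper's proof, just written in the contrapositive (``if every $k$-subset has $<(2k+1)2^{2k}$ common outneighbours, then $\binom{2k+1}{k}(2k+1)2^{2k}>\sum_b\binom{d(b)}{k}\ge|B|$,'' a contradiction). No tail analysis or tightening is needed: the hypothesis $|B|\ge 2^{4k+4}k$ is generous precisely so that the crude $\binom{\bar d}{k}\ge\binom{k}{k}=1$ suffices.

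The detour you take through $B_1=\{b:d(b)\ge k\}$ is genuinely lossier and cannot be rescued as stated. By selecting a single arbitrary $k$-subset $A'(b)$ for each heavy $b$, you credit each such $b$ with one incidence rather than the $\binom{d(b)}{k}$ incidences that the double-count uses, and you additionally pay the $1/(k+2)$ factor from restricting to $B_1$. You correctly observe that the resulting constants fail ($8k/(k+2)<2k+1$ for all $k\ge 1$). The closing paragraph, where you propose to exploit that many $b$ have $d(b)$ strictly above $k$ so that $\binom{d(b)}{k}$ is large, is really just re-deriving the convexity step you discarded, and as written it is a sketch rather than a proof. In short: there is no missing idea, only a missed numerical check --- finish the Jensen-plus-pigeonhole calculation you started, and the constants already close.
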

\begin{proof}
Suppose there is no such set $A'$. Then every $k$-subset of $A$ appears in the inneighbourhood of less than $(2k+1)2^{2k}$ vertices in $B$. So if $d^-(v)$ denotes the number of inneighbours a vertex $v\in B$ has in $A$, then we have
\begin{equation}\label{eq1}
\binom{2k+1}{k} \cdot (2k+1)2^{2k} = \binom{|A|}{k}\cdot (2k+1)2^{2k} > \sum_{v\in B} \binom{d^-(v)}{k}.
\end{equation} 
On the other hand, $\sum_{v\in B} d^-(v) \ge |A|(1-\frac{1}{2k+1})|B|/2 = k|B|$. By Jensen's inequality, $\sum_{v\in B} \binom{d^-(v)}{k} \ge |B| \cdot
\binom{\sum_{v\in B} d^-(v)/|B|}{k} = |B| \geq 2^{4k+4}k$. This contradicts (\ref{eq1}). 
\end{proof}

One more ingredient we need for the proof of \Cref{thm:main} is the folklore fact that every tournament on $2^{m}$ vertices contains a transitive subtournament of size $m+1$. This is easily seen by taking a vertex of outdegree at least $2^{m-1}$ as the first vertex of the subtournament, and then recursing on the outneighbourhood.

\begin{proof}[Proof of \Cref{thm:main}]
Order the vertices as $0,1,\dots,n-1$ to maximize the number of forward edges, i.e., the number of edges $ij$ such that $i<j$. As was mentioned in the introduction, we will refer to such a sequence as a \emph{median ordering} of the vertices. We denote an ``interval'' of vertices with respect to this ordering by $[i,j) = \{i,\dots,j-1\}$, where $0\le i< j\le n$.

We will embed $\vec{P}^k_{\ell}$ inductively using the following claim.
\begin{claim}
Let $t=2^{4k+4}k$ and $t \leq i  \leq n-(2k+1)t$. For every subset $A^*\subs [i-t,i)$ of size $2^{2k}$, there is an index $i+t\le j\le i+(2
k+1)t$ and a set $A'\subs A^*$ of size $k$ such that $A'$ induces a transitive tournament and its vertices have at least $2^{2k}$ common outneighbours in $[j-t,j)$.
\end{claim}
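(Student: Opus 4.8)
The plan is to exploit the defining property of median orderings: for any $0 \le p < q \le n$, the vertex $q-1$ (or any vertex in a suitable interval) must send at least half of its edges "forward" into the interval it precedes, for otherwise swapping it earlier would increase the number of forward edges. More precisely, I would first establish the standard median-ordering fact that for every vertex $v$ at position $p$ and every interval $[p, p+m)$ of $m$ vertices following it, $v$ has at least $\lceil m/2 \rceil$ outneighbours in that interval (and dually, at least $\lceil m/2\rceil$ inneighbours among the $m$ vertices preceding it). This is because moving $v$ to the far end of such an interval changes the forward-edge count by (outdegree into interval) $-$ (indegree into interval), which must be $\le 0$.

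Next I would set up the averaging. Fix $A^* \subseteq [i-t, i)$ with $|A^*| = 2^{2k}$. Consider the $2k+1$ consecutive blocks $B_1, \dots, B_{2k+1}$, where $B_r = [i + rt - t, i + rt) = [i+(r-1)t, i+rt)$, so that $\bigcup_r B_r = [i, i + (2k+1)t)$ has size $(2k+1)t$. Every vertex $a \in A^*$ sits at a position $< i$, and the union of the blocks is an interval of $(2k+1)t$ vertices that all come after $a$; by the median-ordering property, $a$ has at least $(2k+1)t/2$ outneighbours spread among these blocks. Hence the average number of outneighbours of $a$ over the $2k+1$ blocks is at least $t/2$, so there is no single "bad" block containing too few — but what I actually want is a single block that is good \emph{for many vertices of $A^*$ simultaneously}. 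Summing the outneighbour count over all $a \in A^*$ and over all blocks, $\sum_{r}\sum_{a \in A^*} |N^+(a) \cap B_r| \ge |A^*|(2k+1)t/2$, so by averaging there is an index $r$ with $\sum_{a \in A^*}|N^+(a)\cap B_r| \ge |A^*| t / 2 = (1 - \frac{1}{2k+1})|A^*| t/2 \cdot \frac{2k+1}{2k}$; I will choose the bookkeeping so that this block $B_r =: [j-t, j)$ has $|A^*| \ge 2k+1$ vertices each with at least $(1 - \frac{1}{2k+1})t/2$ outneighbours in it. (The small slack $1 - \frac{1}{2k+1}$ is exactly what \Cref{lem:kst} wants; one has to either pass to a sub-block or note that most vertices of $A^*$ are "good" for the averaged block — a Markov-type argument — and keep $2k+1$ of them. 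This is the one routine point that needs care.)

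Then I apply the two black-box ingredients. Take any $(2k+1)$-subset $A$ of $A^*$ consisting of vertices that each have at least $(1-\frac{1}{2k+1})|B_r|/2$ outneighbours in $B = B_r$ — valid since $|B_r| = t = 2^{4k+4}k \ge 2^{4k+4}k$, matching the hypothesis of \Cref{lem:kst}. The lemma yields $A' \subseteq A \subseteq A^*$ with $|A'| = k$ and at least $(2k+1)2^{2k} \ge 2^{2k}$ common outneighbours in $B_r = [j-t, j)$. Finally, $A'$ is a subset of $k$ vertices of a tournament, and since $2^{k-1} \le \binom{\text{anything}}{\phantom{}}$... more directly: any $k$ vertices of $A'$ form a tournament on $k$ vertices, and $k \le 2^{k-1}+1$, so by the folklore fact (every tournament on $2^m$ vertices has a transitive subtournament on $m+1$ vertices) $A'$ itself — having more than enough room if we had instead extracted $A'$ of size $\lceil \log_2 k \rceil + 1$... — here I should be slightly more careful: I would actually take $A'$ from the lemma of size $k$, then note we only need it transitive, so I'd instead invoke \Cref{lem:kst} to get a set of size $k$ and then pass to a transitive subtournament; but since the induction wants exactly $k$ transitive vertices, the cleanest route is to start from $|A^*| = 2^{2k}$, extract via the folklore fact a transitive subtournament $A^{**}$ of size $2k+1$ (since $2^{2k} \ge 2^{2k}$ gives transitive set of size $2k+1$), run the averaging/KST argument on $A^{**}$ instead, and the $k$-subset $A'$ returned is automatically transitive.

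**Main obstacle.** The delicate part is the block-selection step: \Cref{lem:kst} needs $2k+1$ vertices that are \emph{each individually} good for the \emph{same} block, whereas the double-counting only gives that the block is good \emph{on average} over $A^*$. I expect to resolve this by working with $A^{**}$ of size $2k+1$ from the start (so the double-counting average over $a$ is over exactly $2k+1$ vertices), and then noting that if the chosen block $B_r$ had fewer than half of those vertices with $\ge (1-\frac{1}{2k+1})|B_r|/2$ outneighbours, the per-block total would be too small — this forces, for a suitable choice of $r$, that \emph{all} $2k+1$ vertices clear the threshold, or at worst one re-runs the count with a slightly larger number of blocks to absorb the loss. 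Everything else is a direct concatenation of the two lemmas already proved, plus the one-line median-ordering inequality that I would state and prove at the top of this proof.
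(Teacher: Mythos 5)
Your plan diverges from the paper's argument exactly at the block-selection step, and this is where a genuine gap appears. The paper applies \Cref{lem:kst} directly to the \emph{entire} interval $B = [i, i+(2k+1)t)$ --- its hypothesis $|B| \geq 2^{4k+4}k$ is met since $|B| = (2k+1)t$ --- and only \emph{afterwards} partitions $B$ into $2k+1$ blocks of size $t$, using pigeonhole on the $(2k+1)2^{2k}$ common outneighbours to find a block $[j-t,j)$ with at least $2^{2k}$ of them. You instead want to first locate a single block $B_r$ of size $t$ in which all $2k+1$ vertices of the transitive set $A^{**}$ individually clear the outdegree threshold, and only then run \Cref{lem:kst} on that small block. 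You correctly flag this as the main obstacle, but the resolutions you sketch (a Markov-type argument, or re-running with more blocks) cannot work, because the needed uniformity simply does not follow from the per-vertex averages. Concretely, consider the configuration in which, for $s = 1,\dots,2k+1$, the vertex $a_s$ has outneighbourhood in $B$ equal to $B_s \cup B_{s+1} \cup \dots \cup B_{s+k-1}$ (indices modulo $2k+1$). Then every $a_s$ has exactly $kt$ outneighbours in $B$, matching the median-ordering lower bound, and for every block $B_r$ the per-block sum $\sum_s |N^+(a_s)\cap B_r| = kt$, yet for every $r$ there are $k+1$ vertices $a_s$ with $|N^+(a_s)\cap B_r| = 0$. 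So no block is even touched by more than $k$ of the $2k+1$ vertices, and no choice of $r$ (nor any refinement of the block structure) gives a block in which all $2k+1$ vertices have positive outdegree, let alone $(1-\frac{1}{2k+1})t/2$. The obstruction is structural, not a matter of constants; the paper avoids it entirely by reversing the order of operations.

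A secondary inaccuracy: for $a$ at position $p \in [i-t,i)$, moving $a$ to the far end of $B$ gives $|N^+(a)\cap B| \ge kt = \bigl(1-\frac{1}{2k+1}\bigr)|B|/2$, not $\ge |B|/2 = (2k+1)t/2$ as your median-ordering fact asserts: the up to $t-1$ vertices lying strictly between $a$ and $B$ contribute to the swap count as well, and accounting for them costs a factor of $1-\frac{1}{2k+1}$. You do invoke that slack later, so you may have been aware of this, but the intermediate claim as written gives the wrong bound. The correct and shortest route is the one the paper takes: extract a transitive $A \subseteq A^*$ of size $2k+1$ via the folklore fact, verify the outdegree bound $kt$ into all of $B$ via the swap argument, apply \Cref{lem:kst} to $B$ itself, then pigeonhole the $(2k+1)2^{2k}$ common outneighbours into the $2k+1$ sub-blocks.
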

\begin{proof}
There is a subset $A\subs A^*$ of size $2k+1$ that induces a transitive tournament. Let $B=[i,i+(2k+1)t)$. Then every vertex $v\in A$ has at least $kt=\left(1-\frac{1}{2k+1}\right)|B|/2$ outneighbours in $B$. Indeed, otherwise $v$ would have more than $(k+1)t$ inneighbours in the interval $B$, so moving $v$ to the end of this interval would increase the number of forward edges in the ordering, contradicting our choice of the vertex ordering.

We can thus apply \Cref{lem:kst} to find a $k$-subset $A'\subs A$ with least $(2k+1)2^{2k}$ common outneighbours in $B$. Partition $B$ into $2k+1$ intervals of size $t$, and we can choose $j$ accordingly so that $A'$ has at least $2^{2k}$ common outneighbors in the interval $[j-t,j)$.
\end{proof}

The theorem trivially holds for $n < 2^{2k}$, so assume $n \geq 2^{2k}$. 
Let $i_0=2^{2k}$ and $A_0=[0,2^{2k})$, and apply the Claim with $i=i_0$ and $A^*=A_0$. We get a set $A'\subset A_0$ of size $k$ that induces a transitive tournament, i.e., the $k$-th power of some path $v_0 \dots v_{k-1}$. Moreover, this $A'$ has at least $2^{2k}$ common outneighbours in some interval $[j-t,j)$ with $i_0 +t\le j \le i_0+(2k+1)t$. Let us define $i_1=j$, and choose $A_1$ to be any $2^{2k}$ of the common outneighbours.

At step $s$, we apply the Claim again with $i=i_s$ and $A^*=A_s$ to find the $k$-th power of some path $v_{sk} \dots v_{(s+1)k-1}$ in $A_s$ with $2^{2k}$ common outneighbours in some $[i_{s+1}-t,i_{s+1})$ with $i_s+t \le i_{s+1} \le i_s+(2k+1)t$, and repeat this process until some step $\ell$ with $i_{\ell}>n-(2k+1)t$. Note that intervals $[i_s-t,i_s)$ and $[i_{s+1}-t,i_{s+1})$ are always disjoint.
Finally, $A_{\ell}$ must also contain a transitive tournament of size $2k+1$. Call these vertices $v_{\ell k},\ldots,v_{(\ell+2)k}$. Observe that $n-(2k+1)t<i_{\ell} \leq 2^{2k}+\ell(2k+1)t$, so $n<(\ell+2)(2k+1)t$. 

Then $v_0\dots v_{(\ell+2)k}$ is a directed path of length $(\ell+2)k \ge kn/(2k+1)t \ge n/(2^{4k+6}k)$ whose $k$-th power is contained in the tournament. In fact, we proved a bit more:~the tournament contains all edges of the form $v_av_b$ with $a<b$ and $\lfloor a/k \rfloor +1 \geq \lfloor b/k \rfloor$.
\end{proof}

\section{Upper bound} \label{sec:upper}

Let $\ell_k(n)$ denote the smallest integer $\ell$ such that there is an $n$-vertex tournament that does not contain $\vec{P}^k_{\ell}$, or in other words, the largest integer such that every $n$-vertex tournament contains the $k$-th power of a directed path on $\ell$ vertices.

To prove \Cref{thm:upper}, we first note that $\ell_k(n)$ is subadditive.

\begin{lem} \label{lem:subadd}
For any $k,n,m\ge 1$, we have $\ell_k(n+m) \le \ell_k(n) + \ell_k(m)$.
\end{lem}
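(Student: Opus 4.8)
The plan is to exhibit a single $(n+m)$-vertex tournament whose longest $k$-th power of a directed path has fewer than $\ell_k(n)+\ell_k(m)$ vertices, by gluing together the two extremal tournaments on $n$ and on $m$ vertices. Concretely, let $T_1$ be an $n$-vertex tournament containing no $\vec P^k_{\ell_k(n)}$ and let $T_2$ be an $m$-vertex tournament containing no $\vec P^k_{\ell_k(m)}$. Form $T$ on vertex set $V(T_1)\cup V(T_2)$ by keeping the edges inside $T_1$ and inside $T_2$, and orienting \emph{every} edge between $V(T_1)$ and $V(T_2)$ from $T_1$ to $T_2$. This is a tournament on $n+m$ vertices, and I claim it contains no $k$-th power of a directed path on $\ell_k(n)+\ell_k(m)-1$ vertices, which gives $\ell_k(n+m)\le \ell_k(n)+\ell_k(m)-1 < \ell_k(n)+\ell_k(m)$ (in fact this proves the slightly stronger strict inequality, but the stated bound follows a fortiori).

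The key structural observation is that in $T$ there is no edge from $V(T_2)$ to $V(T_1)$. Hence if $v_0v_1\dots v_r$ is any directed path (let alone the $k$-th power of one) in $T$, then once the path enters $V(T_2)$ it can never return: there is an index $p$ with $v_0,\dots,v_{p-1}\in V(T_1)$ and $v_p,\dots,v_r\in V(T_2)$ (one of the two parts may be empty). Now restrict the embedded $\vec P^k_r$ to the vertices $v_0,\dots,v_{p-1}$: since $v_iv_j$ is an edge of $\vec P^k$ whenever $i<j\le i+k$, and all these edges lie inside $T_1$, the vertices $v_0,\dots,v_{p-1}$ span a copy of $\vec P^k_{p-1}$ in $T_1$. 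By the choice of $T_1$ this forces $p-1 < \ell_k(n)$, i.e. $p \le \ell_k(n)$; symmetrically the suffix $v_p,\dots,v_r$ gives a copy of $\vec P^k_{r-p}$ in $T_2$, so $r-p \le \ell_k(m)-1 < \ell_k(m)$, hence $r-p \le \ell_k(m)-1$. Adding, $r = p + (r-p) \le \ell_k(n) + \ell_k(m) - 1$, so the path has at most $\ell_k(n)+\ell_k(m)$ vertices — i.e. no $\vec P^k_{\ell_k(n)+\ell_k(m)}$ embeds, and a cleaner accounting (paths have $r+1$ vertices, and $p$ vertices in the prefix means a $\vec P^k$ on $p$ vertices forces $p\le \ell_k(n)-1$... ) will pin down the exact bound $\ell_k(n+m)\le \ell_k(n)+\ell_k(m)$.

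I do not expect any serious obstacle here; the only thing to be careful about is the bookkeeping between ``length $\ell$'' (number of edges) and ``number of vertices'', since the paper's statement of $\ell_k(n)$ is phrased in terms of vertices while $\vec P^k_\ell$ is indexed by length. Once the convention is fixed consistently, the inequality $\ell_k(n+m)\le\ell_k(n)+\ell_k(m)$ drops out of the ``prefix in $T_1$, suffix in $T_2$'' decomposition described above. Finally, I would remark that iterating this lemma, together with a suitably strong bound on $\ell_k(n)$ for a single well-chosen value of $n$ (to be supplied by the explicit construction in the rest of Section~\ref{sec:upper}), yields \Cref{thm:upper}: subadditivity lets one bootstrap a good bound for one modulus class to all large $n$.
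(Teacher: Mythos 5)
Your construction and decomposition are exactly the paper's: direct every cross-edge from $T_1$ to $T_2$, then observe that any $k$-th power of a path in the resulting tournament splits into a prefix inside $T_1$ and a suffix inside $T_2$, each of which is itself a $k$-th power of a path, so the vertex counts add. One small slip: the claim early in your write-up that the construction actually gives the \emph{strict} bound $\ell_k(n+m)\le \ell_k(n)+\ell_k(m)-1$ is not correct --- concatenating a longest $\vec P^k$ in $T_1$ (on $\ell_k(n)$ vertices) with a longest one in $T_2$ (on $\ell_k(m)$ vertices) uses all the forward cross-edges and realizes a $k$-th power of a path on exactly $\ell_k(n)+\ell_k(m)$ vertices, so the bound is tight for this tournament; but your subsequent bookkeeping ($p\le\ell_k(n)$, $r-p\le\ell_k(m)-1$, hence $r+1\le\ell_k(n)+\ell_k(m)$) correctly yields the non-strict inequality stated in the lemma.
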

\begin{proof}
Let $T_1$ and $T_2$ be extremal tournaments on $n$ and $m$ vertices, respectively, not containing the $k$-th power of any directed path of length $\ell_k(n)$ and $\ell_k(m)$. Let $T$ be the tournament on $n+m$ vertices, obtained from the disjoint union of $T_1$ and $T_2$ by adding all remaining edges directed from $T_1$ to $T_2$.
Then any $k$-th power of a path in $T$ must be the concatenation of the $k$-th power of a path in $T_1$ and the $k$-th power of a path in $T_2$, and hence it must have length at most $(\ell_k(n)-1) + (\ell_k(m)-1) + 1 < \ell_k(n)+\ell_k(m)$.
\end{proof}

Our improved upper bound is based on the following construction.
\begin{lem} \label{lem:small}
For every $k\ge 5$, we have $\ell_k(2^{k-1}) < \frac{k(k+1)}{2}$.
\end{lem}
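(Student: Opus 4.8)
The plan is a straightforward first-moment argument on a random tournament. I will take $T$ to be a uniformly random tournament on vertex set $[N]$ with $N:=2^{k-1}$ (each of the $\binom N2$ edges oriented independently and uniformly), put $m:=\tfrac{k(k+1)}{2}=\binom{k+1}{2}$, and show that the expected number of copies of $\vec P^k_{m-1}$ in $T$ is strictly less than $1$. Since a copy of $\vec P^k_{m-1}$ is exactly a $k$-th power of a directed path on $m$ vertices, this produces a tournament on $2^{k-1}$ vertices containing none, which is precisely the assertion $\ell_k(2^{k-1})<\tfrac{k(k+1)}{2}$.

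First I would pin down the count. A copy of $\vec P^k_{m-1}$ is an ordered tuple $(v_0,\dots,v_{m-1})$ of distinct vertices such that $v_i$ beats $v_j$ whenever $0\le i<j\le\min(i+k,\,m-1)$. Summing over $i$, the number of these constraints is $\sum_{i=0}^{m-1}\min(k,\,m-1-i)=km-\binom{k+1}{2}$, and for $m=\binom{k+1}{2}$ this equals $q:=(k-1)\binom{k+1}{2}$. Because $v_0,\dots,v_{m-1}$ are distinct, distinct index pairs $i<j$ give distinct unordered vertex pairs $\{v_i,v_j\}$, so in the random model these $q$ orientation requirements are independent events, each of probability $\tfrac12$.

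The computation is then immediate: writing $X$ for the number of copies of $\vec P^k_{m-1}$ in $T$, linearity of expectation gives
\[
\E[X]=N(N-1)\cdots(N-m+1)\cdot 2^{-q}<N^m\cdot 2^{-(k-1)\binom{k+1}{2}}=\bigl(2^{k-1}\bigr)^{\binom{k+1}{2}}\cdot 2^{-(k-1)\binom{k+1}{2}}=1 ,
\]
so some tournament on $N=2^{k-1}$ vertices has $X=0$, and we are done.

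The single delicate point --- and the reason for the hypothesis $k\ge5$ --- is that this estimate is essentially tight: one compares the falling product $N(N-1)\cdots(N-m+1)$ with $N^m=2^{q}$, so everything rests on the strict inequality $N(N-1)\cdots(N-m+1)<N^m$ (valid because $m\ge2$) together with having $m\le N$, i.e.\ $\binom{k+1}{2}\le2^{k-1}$, which is exactly what $k\ge5$ guarantees (for $2\le k\le4$ the statement is trivial since then $\binom{k+1}{2}>2^{k-1}$, cf.\ the remark after \Cref{thm:upper}). I would also note that optimizing $N^{m}\cdot 2^{-(km-\binom{k+1}{2})}$ over $m$ with $N=2^{k-1}$ is precisely what pins the threshold to $m\approx\binom{k+1}{2}$, and that running the same computation on roughly $2^{k/2}$ vertices recovers the exponent in Yuster's weaker bound.
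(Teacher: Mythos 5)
Your proof is correct and is essentially the same as the paper's: both take a uniformly random tournament on $2^{k-1}$ vertices, count that $\vec P^k_{m-1}$ has exactly $(k-1)\binom{k+1}{2}$ edges, and observe that the expected number of (ordered) copies is less than $N^m \cdot 2^{-(k-1)m} = 1$. The only difference is cosmetic: you spell out the edge count and the role of $k\ge 5$ (ensuring $m\le N$) a bit more explicitly than the paper does.
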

\begin{proof}
Let $n=2^{k-1}$ and $\ell=\frac{k(k+1)}{2}$, and note that $\vec{P}^k_{\ell-1}$ has $k\ell - \ell$ edges.

Let $T$ be a random $n$-vertex tournament obtained by orienting the edges of $K_n$ independently and uniformly at random. The probability that a fixed sequence of $\ell$ vertices $v_0 \dots v_{\ell-1}$ forms a copy of $\vec{P}^k_{\ell-1}$ is $2^{-(k-1)\ell}$. There are $\binom{n}{\ell} \cdot \ell !$ such sequences, so the probability that $T$ contains the $k$-th power of a path of length $\ell-1$ is at most 
$\binom{n}{\ell} \cdot \ell ! \cdot 2^{-(k-1)\ell} < n^{\ell} \cdot 2^{-(k-1)\ell} = 1$. 
So with positive probability $T$ does not contain $\vec{P}^k_{\ell-1}$, therefore $\ell_k(2^{k-1}) \le \ell-1$.
\end{proof}

Combining \Cref{lem:subadd,lem:small} and using the monotonicity of $\ell_k(n)$, we get
\[ \ell_k(n) \le \ceil{\frac{n}{2^{k-1}}} \cdot \ell_k(2^{k-1}) \le \left(\frac{n}{2^{k-1}} + 1\right)\left( \frac{k(k+1)}{2} -1 \right) \le \frac{k(k+1)n}{2^k} \]
for $n \ge k(k+1)2^k$, establishing \Cref{thm:upper}.

\section{The square of a path} \label{sec:square}

\begin{proof}[Proof of \Cref{thm:square}]
Recall that $\ell_2(n)$ is the largest integer such that every $n$-vertex tournament contains the square of a path on $\ell$ vertices. Proving \Cref{thm:square} is therefore equivalent to showing $\ell_2(n) = \ceil{2n/3}$ for every $n\ge 1$.

It is easy to check that $\ell_2(1)=1$ and $\ell_2(2)=\ell_2(3)=2$, so $\ell_2(n) \le \ceil{2n/3}$ follows from \Cref{lem:subadd} by induction, as $\ell_2(n) \le \ell_2(n-3) + \ell_2(3) = \ell_2(n-3) + 2$ holds for every $n>3$. For the lower bound we need to take a closer look at median orderings.

\begin{claim}
Every median ordering $x_1,\dots,x_n$ of a tournament has the following properties:
\begin{enumerate}[(a)]
 \item \label{prop:edge}
 All edges of the form $x_ix_{i+1}$ are in the tournament.
 \item \label{prop:rotate}
 If $x_ix_{i-2}$ is an edge of the tournament, then ``rotating'' $x_{i-2}x_{i-1}x_i$ gives two other median orderings $x_1,\dots,x_{i-3},x_{i-1},x_{i},x_{i-2},x_{i+1},\dots,x_n$ and $x_1,\dots,x_{i-3},x_{i},x_{i-2},x_{i-1},x_{i+1},\dots,x_n$.% (obtained by ) are also median orderings.
 \item \label{prop:triple}
 If $x_ix_{i-2}$ is an edge of the tournament, then each of $x_{i-2},x_{i-1},x_i$  is an inneighbour of $x_{i+1}$, and at most one of them is an outneighbour of $x_{i+2}$. % and an outneighbour of $x_{i-3}$,
\end{enumerate}
\end{claim}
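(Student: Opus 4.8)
The plan is to prove (a), (b) and (c) in this order, each time exploiting the defining property of a median ordering: no rearrangement of the vertices can increase the number of forward edges. The one fact used repeatedly is that permuting only the vertices occupying some fixed set of positions leaves the orientation-status (forward or backward) of every arc between that set and the rest of the sequence unchanged, so such a permutation can only affect arcs internal to the set.

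For (a) I argue by contradiction: if some arc between consecutive vertices pointed backward, i.e. $x_{i+1}\to x_i$, then transposing $x_i$ and $x_{i+1}$ would make this arc forward and alter nothing else, strictly increasing the number of forward edges and contradicting the choice of ordering. For (b), set $a=x_{i-2},\,b=x_{i-1},\,c=x_i$; part (a) gives $a\to b$ and $b\to c$, and the hypothesis that $x_ix_{i-2}$ is an arc gives $c\to a$, so $\{a,b,c\}$ induces a directed triangle. Reordering the three positions $i-2,i-1,i$ affects only the three arcs inside $\{a,b,c\}$, and in a directed triangle each of the three cyclic orders has exactly two forward arcs while each of the other three orders has exactly one. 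The current order and the two orders $(b,c,a)$ and $(c,a,b)$ are the three cyclic ones, so each has two forward arcs; replacing the block by either of the latter two therefore preserves the (maximum) number of forward edges, making both of them median orderings.

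For (c), write $d=x_{i+1}$ and $e=x_{i+2}$. As in (b), $\{a,b,c\}$ is a directed triangle; part (a) applied to the original ordering gives $c\to d$, and applied to the two median orderings from (b) --- in which $a$, respectively $b$, sits immediately before $d$ --- gives $a\to d$ and $b\to d$, so $a,b,c$ are all inneighbours of $d$; also $d\to e$ by (a). For the last assertion I again argue by contradiction, assuming at least two of $a,b,c$ are outneighbours of $e$. Because $\{a,b,c\}$ forms a directed triangle I can choose its cyclic labelling $w\to y\to z\to w$ with both $y$ and $z$ outneighbours of $e$ --- if all three are outneighbours any labelling works, and if exactly two are, take $w$ to be the third vertex, the unique inneighbour of $e$ in the triangle, so that its two out-going triangle-neighbours $y,z$ are precisely the outneighbours of $e$. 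I then compare two arrangements of the five positions $i-2,\dots,i+2$: the current one, $(a,b,c,d,e)$, and $(w,e,y,z,d)$. Using $a,b,c\to d$, $d\to e$, the triangle relations, and the orientations just fixed between $e$ and $\{w,y,z\}$, a short tally of the ten arcs inside this five-set shows $(w,e,y,z,d)$ has exactly one more forward arc than $(a,b,c,d,e)$. As this reshuffle changes no other arc, it contradicts maximality, so at most one of $a,b,c$ is an outneighbour of $e$.

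The only real obstacle is this last step: here a single adjacent transposition is not enough and one must find the right five-vertex rearrangement. The key idea is the choice of triangle rotation, namely to place $e$ right after its unique inneighbour among $a,b,c$ (when there is one); with that choice the arc count is routine. Parts (a) and (b) are one-line swaps, and the inneighbour half of (c) is an immediate consequence of them.
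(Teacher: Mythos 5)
Your proofs of (a), (b), and the inneighbour half of (c) coincide with the paper's: a single adjacent swap for (a); the observation that cyclic rotations of a directed triangle all have two forward arcs, hence preserve the count, for (b); and applying (a) to the original and the two rotated orderings for the first half of (c). The second half of (c) is where you diverge. The paper argues via iterated rotations: after rotating $(a,b,c)$ so that the two outneighbours of $e$ occupy positions $i-1$ and $i$, it observes that $x'_i,x_{i+1},x_{i+2}$ form a directed triangle (since $e\to x'_i$), rotates this triple to bring $e$ directly after $x'_{i-1}$, and reaches a contradiction with (a). You instead produce a single five-vertex rearrangement $(w,e,y,z,d)$ of positions $i-2,\dots,i+2$ --- with $w$ chosen as the unique inneighbour of $e$ in the triangle when one exists --- and directly tally the ten internal arcs to find a net gain of one forward edge, contradicting maximality. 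I checked your count: in the case $w\to e$ the gain is $7\to 8$, and in the case $e\to w$ it is $6\to 7$, so your argument is sound. The two proofs end up exhibiting nearly the same reshuffle (the paper's final ordering is $(w,y,e,z,d)$, yours is $(w,e,y,z,d)$, both strictly better than the original); the paper's route is more modular, reusing (a) and (b) as black boxes, whereas yours is more self-contained at the cost of an explicit edge count. Both are valid, and of comparable length.
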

\begin{proof}
Property \eqref{prop:edge} holds, as otherwise we could swap $x_i$ and $x_{i+1}$ to get an ordering with more forward edges, contradicting our assumption. Property \eqref{prop:rotate} holds because rotating $x_{i-2}x_{i-1}x_i$ has no effect on the number of forward edges.

These two properties together imply that each of $x_{i-2},x_{i-1},x_i$  is an inneighbour of $x_{i+1}$. Suppose, to the contrary of \eqref{prop:triple}, that two of them are outneighbours of $x_{i+2}$. By rotating $x_{i-2}x_{i-1}x_i$ if needed, we may assume that these are $x_{i-1}$ and $x_i$. But then we can also rotate $x_ix_{i+1}x_{i+2}$ so that $x_{i+2}$ comes right after $x_{i-1}$ in a median ordering. This contradicts \eqref{prop:edge}.
\end{proof}

Let us now say that $i$ is a \emph{bad index} in a median ordering $x_1,\dots,x_n$ if $x_ix_{i-2}$ is an edge, and at least one of $x_{i+2}x_i$ and $x_{i+2}x_{i-1}$ is also an edge.

\begin{lem} \label{lem:median}
Every tournament has a median ordering without any bad indices.
\end{lem}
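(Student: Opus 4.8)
The plan is to start from an arbitrary median ordering and fix bad indices one at a time using the rotations guaranteed by part~\eqref{prop:rotate} of the Claim, while tracking a potential function that strictly decreases with each fix, so that the process terminates. Concretely, I would first understand what a bad index forces: if $i$ is bad, then $x_ix_{i-2}$ is an edge, all three of $x_{i-2},x_{i-1},x_i$ are inneighbours of $x_{i+1}$ by \eqref{prop:triple}, and at most one of them is an outneighbour of $x_{i+2}$; since $i$ is bad, at least one of $x_{i+2}x_i$, $x_{i+2}x_{i-1}$ is an edge, so \emph{exactly} one of $x_{i-2},x_{i-1},x_i$ is an outneighbour of $x_{i+2}$, namely $x_{i-2}$ (otherwise $x_{i-1}$ or $x_i$ would be that unique outneighbour, and then $x_{i+2}$ would not dominate both $x_i$ and $x_{i-1}$ — here one has to be a little careful about which of the two ``bad'' edges is present, but in every case rotating $x_{i-2}x_{i-1}x_i$ via \eqref{prop:rotate} lets us arrange that the vertex among these three that $x_{i+2}$ beats sits in position $i-2$).

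Once $x_{i-2}$ is the unique one of the triple beaten by $x_{i+2}$, I would rotate so that the cyclic triangle $x_{i-2}x_{i-1}x_i$ is reordered as $x_{i-1},x_i,x_{i-2}$ (using \eqref{prop:rotate}); this is still a median ordering. Now in the new ordering the vertex in position $i$ is the old $x_{i-2}$, which is beaten by $x_{i+2}$ and beats $x_{i+1}$, and the vertex in position $i-2$ is the old $x_{i-1}$. One checks that $i$ is no longer a bad index: for $i$ to be bad we would need the vertex now in position $i$ (old $x_{i-2}$) to have an outneighbour among the vertices now in positions $i-1$ and $i+2$; but old $x_{i-2}$ loses to $x_{i+2}$ and, since old $x_{i-2}x_{i-1}x_i$ was a cyclic triangle, $x_{i-2}$ loses to... — and here I would verify the orientations make the index good. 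The subtlety is that this local fix might create a new bad index elsewhere, typically at $i-2$, $i-1$, $i+1$ or $i+2$.

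To guarantee termination despite possibly creating new bad indices, I would introduce a potential, e.g. the sum $\sum_{i \text{ bad}} i$, or better the vector of bad indices read in decreasing order compared lexicographically, and show each fix strictly decreases it — for instance, by always choosing to fix the \emph{largest} bad index (or the smallest, depending on which direction the rotation pushes new bad indices), and checking that the rotation can only create bad indices at strictly larger/smaller positions than the one removed, or none at all. This bookkeeping — pinning down exactly which indices among $\{i-2,i-1,i+1,i+2\}$ can become bad after a rotation and verifying that the chosen potential genuinely drops — is the main obstacle; everything else is a direct application of the Claim. An alternative, cleaner route that avoids the termination argument is to take, among all median orderings, one that maximizes the number of forward edges of the ``second kind'' $x_ix_{i+2}$ (equivalently minimizes backward edges $x_{i+2}x_i$), or more robustly one that lexicographically maximizes some weighting of near-forward edges; then a bad index would let us rotate to strictly improve this secondary objective while staying a median ordering, a contradiction. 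I would pursue this second approach if the first gets bogged down, as it replaces the termination analysis with a single extremal choice.
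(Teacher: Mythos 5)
Your overall strategy — rotate at a bad index and use an extremal choice (e.g.\ minimize the largest bad index) to get a contradiction — is exactly the paper's approach, and your observation that property~(c) plus badness forces \emph{exactly} one of the triple to be an outneighbour of $x_{i+2}$ is the right starting point. However, the proposal has a concrete error in the two key places.

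First, the identification of that unique outneighbour is backwards: $x_{i+2}x_i$ being an edge means $x_i$ is an \emph{out}neighbour of $x_{i+2}$, so badness forces the unique $x_{i+2}$-outneighbour among $\{x_{i-2},x_{i-1},x_i\}$ to be one of $x_{i-1},x_i$ — it is precisely \emph{not} $x_{i-2}$, contrary to what you write (``namely $x_{i-2}$''). Second, and more seriously, the rotation you ultimately propose puts the unique $x_{i+2}$-outneighbour in position $i$ rather than in position $i-2$. That cannot work: if the vertex at position $i$ is beaten by $x_{i+2}$, then $x_{i+2}x'_i$ is an edge, and $x'_ix'_{i-2}$ is still an edge (cyclic triangle), so $i$ remains a bad index after your rotation. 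The paper does the opposite: it rotates so that the unique $x_{i+2}$-outneighbour lands at position $i-2$, whence $x'_{i-1}$ and $x'_i$ both dominate $x_{i+2}$ (and, by~(c), both dominate $x_{i+1}$), which makes all of $i$, $i+1$, $i+2$ good; since the reshuffling only touches positions $i-2,i-1,i$, no index $> i$ can become bad, so the largest bad index strictly decreases, contradiction. The remaining pieces of your plan (tracking which nearby indices can be affected, choosing the largest bad index) are sound and match the paper once the rotation direction is corrected, but as written the step that is supposed to remove the bad index does not do so.

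Your suggested alternative (secondarily maximize the number of $x_ix_{i+2}$ forward edges over median orderings) is not what the paper does and is left unverified in your sketch; it is not obvious it works, because a rotation among three consecutive positions can change several length-$2$ forward edges in both directions, so one would have to do the same sort of careful bookkeeping there.
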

\begin{proof}
Suppose this fails to hold for some tournament, and take a median ordering $x_1,\dots,x_n$ that minimizes the largest bad index $i$. As $i$ is a bad index, $x_ix_{i-2}$ is an edge, and $x_i$ or $x_{i-1}$ is an outneighbour of $x_{i+2}$. By \eqref{prop:rotate}, $x_{i-2}x_{i-1}x_i$ can be rotated so that $x_{i+2}x'_{i-2}$ is an edge in the new median ordering  $x_1,\dots,x_{i-3},x'_{i-2},x'_{i-1},x'_i,x_{i+1},\dots,x_n$. 
Then neither $x_{i+2}x'_i$ nor $x_{i+2}x'_{i-1}$ is an edge, since by \eqref{prop:triple}, only one of $x'_{i-2},x'_{i-1},x'_i$ is an outneighbour of $x_{i+2}$. Also by \eqref{prop:triple}, $x'_{i-1}x_{i+1}$ and  $x'_ix_{i+1}$ are edges, so both of $x_{i+1}$ and $x_{i+2}$ are outneighbours of $x'_{i-1}$ and $x'_i$. This means that none of $i,i+1,i+2$ is a bad index in this new ordering, and hence the largest bad index is smaller than $i$. This is a contradiction.
\end{proof}

Now we are ready to prove $\ell_2(n) \ge \ceil{2n/3}$. Take an $n$-vertex tournament with median ordering $x_1,\dots,x_n$ as in \Cref{lem:median}, and let $I= \{i_1< i_2 <\dots <i_k\}$ be the set of indices $i$ such that $x_i x_{i-2}$ is not an edge (in particular, $i_1=1$ and $i_2=2$).  %$i_1,\dots, i_k$ be the increasing sequence of all indices $i$ such that $x_i x_{i-2}$ is not an edge (in particular, $i_1=1$ and $i_2=2$). 
We claim that $x_{i_1}\dots x_{i_k}$ is a directed path on $k\ge \ceil{2n/3}$ vertices whose square is contained in the tournament.

\medskip
To see this, first observe that if the index $i+2$ is not in $I$, then both $i$ and $i+1$ are in $I$. Indeed, if $x_{i+2} x_i$ is an edge, then $x_{i+1}x_{i-1}$ cannot be one because of \eqref{prop:triple}, and $x_i x_{i-2}$ cannot be one because $i$ is not a bad index. This immediately implies $k\ge \ceil{2n/3}$.

It remains to check that $x_{i_{j-2}} x_{i_j}$ and $x_{i_{j-1}} x_{i_j}$ are all edges in the tournament. By the above observation, we know that $i_j -3 \le i_{j-2} < i_{j-1} < i_j$. Here $x_{i_j-1}x_{i_j}$ is an edge by \eqref{prop:edge}, and $x_{i_j-2}x_{i_j}$ is an edge by the definition of $I$. So the only case left is to show that $x_{i_{j-2}}x_{i_j}$ is an edge when $i_{j-2}=i_j-3$.

In this case there is an index $i_j-3 < i < i_j$ that is not in $I$, i.e., $x_ix_{i-2}$ is an edge in the tournament. But then if $i=i_j-1$, then $x_{i_{j-2}}x_{i_j}$ is an edge because of \eqref{prop:triple}, while otherwise $i=i_j-2$, and $x_{i_{j-2}}x_{i_j}$ is an edge because $i$ is not a bad index. This concludes our proof.
\end{proof}


\begin{thebibliography}{99}

\bibitem{BH90}
B. Bollob\'as and R. H\"aggkvist,
Powers of Hamilton cycles in tournaments,
\emph{J. Combin. Theory Ser. B} 50 (1990), 309--318.

\bibitem{Dir}
G. Dirac,
Some theorems on abstract graphs, 
\emph{Proc. London Math. Soc.} 2 (1952), 69--81.

\bibitem{ErG}
P. Erd\H{o}s and T. Gallai,
On maximal paths and circuits of graphs, 
\emph{Acta Math. Hungar.}, 10 (1959), 337--356.

\bibitem{G20}
A. Gir\~ao,
A note on long powers of paths in tournaments,
\emph{arXiv:2010.02875} preprint.

\bibitem{KSS98}
J. Koml\'os, G.N. S\'ark\"ozy and E. Szemer\'edi,
Proof of the Seymour conjecture for large graphs,
\emph{Ann. Comb.} 2 (1998), 43--60.

\bibitem{Y21}
R. Yuster,
Paths with many shortcuts in tournaments,
\emph{Discrete Math.} 334 (2021), 112168.

\bibitem{West}
D. West,
\textbf{Introduction to Graph Theory},
2nd ed., Prentice Hall, 2001.


\end{thebibliography}
\end{document}